\newtheorem{theorem}{Theorem}[section]
\newtheorem{corollary}[theorem]{Corollary}
\newtheorem{definition}[theorem]{Definition}
\newtheorem{proposition}[theorem]{Proposition}
\newtheorem{remark}[theorem]{Remark}
\begin{document}
\title{On the class of weak almost limited operators}
\author[A. Elbour]{Aziz Elbour}
\address[A. Elbour]{Universit\'{e} Moulay Isma\"{\i}l, Facult\'{e} des
Sciences et Techniques, D\'{e}partement de Math\'{e}\-matiques, B.P. 509,
Errachidia, Morocco.}
\email{azizelbour@hotmail.com}
\author[N. Machrafi]{Nabil Machrafi}
\address[N. Machrafi]{Universit\'{e} Ibn Tofail, Facult\'{e} des Sciences, D%
\'{e}partement de Math\'{e}\-matiques, B.P. 133, K\'{e}nitra, Morocco.}
\email{nmachrafi@gmail.com}
\author[M. Moussa]{Mohammed Moussa}
\address[M. Moussa]{Universit\'{e} Ibn Tofail, Facult\'{e} des Sciences, D%
\'{e}partement de Math\'{e}\-matiques, B.P. 133, K\'{e}nitra, Morocco.}
\email{mohammed.moussa09@gmail.com}

\begin{abstract}
We introduce and study the class of weak almost limited operators. We
establish a characterization of pairs of Banach lattices $E$, $F$ for which
every positive weak almost limited operator $T:E\rightarrow F$ is almost
limited (resp. almost Dunford-Pettis). As consequences, we will give some
interesting results.
\end{abstract}

\subjclass[2010]{Primary 46B42; Secondary 46B50, 47B65}
\keywords{weak almost limited operator, almost limited operator, almost
Dunford-Pettis operator, wDP$^{\ast }$ property, positive dual Schur
property, almost limited set}
\maketitle

\section{Introduction}

Throughout this paper $X,$ $Y$ will denote real Banach spaces, and $E,\,F$
will denote real Banach lattices. $B_{X}$ is the closed unit ball of $X$%
\textbf{\ }and $\mathrm{sol}\left( A\right) $ denotes the solid hull of a
subset $A$ of a Banach lattice. We will use the term operator $%
T:X\rightarrow Y$ between two Banach spaces to mean a bounded linear mapping.

Let us recall that a norm bounded subset $A$ of $X$ is called a \emph{%
Dunford-Pettis set} (resp.a \emph{limited set}) if each weakly null sequence
in $X^{\ast }$ (resp. weak* null sequence in $X^{\ast }$) converges
uniformly to zero on $A$. An operator $T:X\rightarrow Y$ is called \emph{%
Dunford-Pettis} if $x_{n}\overset{w}{\rightarrow }0$ in $X$ implies $%
\left\Vert Tx_{n}\right\Vert \rightarrow 0$, equivalently, if $T$ carries
relatively weakly compact subsets of $X$ onto relatively compact subsets of $%
Y$. An operator $T:X\rightarrow Y$ is said to be \emph{limited} whenever $%
T\left( B_{X}\right) $ is a limited set in $Y$, equivalently, whenever $%
\left\Vert T^{\ast }\left( f_{n}\right) \right\Vert \rightarrow 0$ for every
weak* null sequence $\left( f_{n}\right) \subset Y^{\ast }$.

Aliprantis and Burkinshaw \cite{AB} introduced the class of weak
Dunford--Pettis operators. An operator $T:X\rightarrow Y$ is said to be 
\emph{weak Dunford--Pettis} whenever $x_{n}\overset{w}{\rightarrow }0$ in $X$
and $f_{n}\overset{w}{\rightarrow }0$ in $Y^{\ast }$ imply $f_{n}\left(
Tx_{n}\right) \rightarrow 0$, equivalently, whenever $T$ carries weakly
compact subsets of $X$ to Dunford--Pettis subsets of $Y$ \cite[Theorem 5.99]%
{AB3}. Next H'michane et al. \cite{Hmi} introduced the class of weak*
Dunford-Pettis operators, and characterized this class of operators and
studied some of its properties in \cite{Kad}. An operator $T:X\rightarrow Y$
is called \emph{weak* Dunford-Pettis} whenever $x_{n}\overset{w}{\rightarrow 
}0$ in $X$ and $f_{n}\overset{w^{\ast }}{\rightarrow }0$ in $Y^{\ast }$
imply $f_{n}\left( Tx_{n}\right) \rightarrow 0$, equivalently, whenever $T$
carries relatively weakly compact subsets of $X$ onto limited subsets of $Y$ 
\cite[Theorem 3.2]{Kad}.

Recently, two classes of norm bounded sets are considered in the theory of
Banach lattices. From \cite{Bouras} (resp. \cite{chen}), a norm bounded
subset $A$ of a Banach lattice $E$ is said to be an \emph{almost
Dunford-Pettis set} (resp. \emph{an almost limited set}), if every disjoint
weak null (resp. weak* null) sequence $(f_{n})$ in $E^{\ast }$ converges
uniformly to zero on $A$. Clearly, all Dunford-Pettis sets (resp. limited
sets) in a Banach lattice are almost Dunford-Pettis (resp. almost limited).
Also, every almost limited set is almost Dunford-Pettis. But the converse
does not hold in general.

Let us recall that an operator $T:E\rightarrow X$ is said to be \emph{almost
Dunford-Pettis}, if $T$ carries every disjoint weakly null sequence to a
norm null sequence, or equivalently, if $T$ carries every disjoint weakly
null sequence consisting of positive terms to a norm null sequence \cite[%
Remark 1]{WN3}. From \cite{Mach}, an operator $T:X\rightarrow E$ is called 
\emph{almost limited} whenever $T\left( B_{E}\right) $ is an almost limited
set in $E$, equivalently, whenever $\left\Vert T^{\ast }\left( f_{n}\right)
\right\Vert \rightarrow 0$ for every disjoint weak* null sequence $\left(
f_{n}\right) \subset E^{\ast }$.

Using the almost Dunford-Pettis sets, Bouras and Moussa \cite{Bouras2}
introduced the class of weak almost Dunford-Pettis operators. An operator $%
T:X\rightarrow E$ is called \emph{weak almost Dunford-Pettis} operator
whenever $T$ carries relatively weakly compact subsets of $X$ to almost
Dunford--Pettis subsets of $E$, equivalently, whenever $f_{n}(T(x_{n}))%
\rightarrow 0$ for all weakly null sequences $(x_{n})$ in $X$ and for all
weakly null sequences $(f_{n})$ in $E^{\ast }$ consisting of pairwise
disjoint terms \cite[Theorem 2.1]{Bouras2}.

In this paper, using the almost limited sets, we introduce the class of 
\emph{weak almost limited} operators $T:X\rightarrow E$, which carries
relatively weakly compact subsets of $X$ to almost limited subsets of $E$
(Definition \ref{def}). It is a class which contains that of weak*
Dunford-Pettis (resp. almost limited). We establish some characterizations
of weak almost limited operators. After that, we derive the domination
property of this class of operators (Corollary \ref{domination}). Next, we
characterize pairs of Banach lattices $E$, $F$ for which every positive weak
almost limited operator $T:E\rightarrow F$ is almost limited (resp. almost
Dunford-Pettis). As consequences, we will give some interesting results.

To show our results we need to recall some definitions that will be used in
this paper. A Banach lattice $E$ has

\begin{itemize}
\item[$\emph{-}$] the Dunford-Pettis property, if $x_{n}\overset{w}{%
\rightarrow }0$ in $E$ and $f_{n}\overset{w}{\rightarrow }0$ in $E^{\ast }$
imply $f_{n}\left( x_{n}\right) \rightarrow 0$ as $n\rightarrow \infty $,
equivalently, each relatively weakly compact subset of $E$ is
Dunford--Pettis.

\item[$\emph{-}$] the Dunford-Pettis* property (DP* property for short), if $%
x_{n}\overset{w}{\rightarrow }0$ in $E$ and $f_{n}\overset{w^{\ast }}{%
\rightarrow }0$ in $E^{\ast }$ imply $f_{n}\left( x_{n}\right) \rightarrow 0$%
, equivalently, each relatively weakly compact subset of $E$ is limited.

\item[$\emph{-}$] the weak Dunford-Pettis* property (wDP* property), if $%
f_{n}\left( x_{n}\right) \rightarrow 0$ for every weakly null sequence $%
\left( x_{n}\right) $ in $E$ and for every disjoint weak* null sequence $%
\left( f_{n}\right) $ in $E^{\ast }$, equivalently, each relatively weakly
compact subset of $E$ is almost limited \cite[Definition 3.1]{chen}.

\item[$\emph{-}$] the Schur (resp. positive Schur) property, if $\left\Vert
x_{n}\right\Vert \rightarrow 0$ for every weak null sequence $\left(
x_{n}\right) \subset E$ (resp. $\left( x_{n}\right) \subset E^{+}$).

\item[$\emph{-}$] the positive dual Schur property, if $\left\Vert
f_{n}\right\Vert \rightarrow 0$ for every weak* null sequence $\left(
f_{n}\right) \subset \left( E^{\ast }\right) ^{+}$, equivalently, $%
\left\Vert f_{n}\right\Vert \rightarrow 0$ for every weak* null sequence $%
\left( f_{n}\right) \subset \left( E^{\ast }\right) ^{+}$ consisting of
pairwise disjoint terms \cite[Proposition 2.3]{WN2012}.

\item[$\emph{-}$] the property (d) whenever $\left\vert f_{n}\right\vert
\wedge \left\vert f_{m}\right\vert =0$ and $f_{n}\overset{w^{\ast }}{%
\rightarrow }0$ in $E^{\ast }$ imply $\left\vert f_{n}\right\vert \overset{%
w^{\ast }}{\rightarrow }0$.
\end{itemize}

It should be noted, by Proposition 1.4 of \cite{WN2012}, that every $\sigma $%
-Dedekind complete Banach lattice has the property $\mathrm{(d)}$ but the
converse is not true in general. In fact, the Banach lattice $\ell ^{\infty
}/c_{0}$\ has the property $\mathrm{(d)}$ but it is not $\sigma $-Dedekind
complete \cite[Remark 1.5]{WN2012}.

Our notions are standard. For the theory of Banach lattices and operators,
we refer the reader to the monographs \cite{AB3, MN}.

\section{Main results}

We start this section by the following definition.

\begin{definition}
\label{def}An operator $T:X\rightarrow E$ from a Banach space $X$ into a
Banach lattice $E$ is called weak almost limited if $T$ carries each
relatively weakly compact set in $X$ to an almost limited set in $E$.
\end{definition}

Clearly, a Banach lattice $E$ has the DP* property (resp. wDP* property) if
and only if the identity operator $I:E\rightarrow E$ is weak* Dunford-Pettis
(resp. weak almost limited). Also, every weak* Dunford-Pettis (resp. almost
limited) operator $T:X\rightarrow E$ is weak almost limited, but the
converse is not true in general. In fact, the identity operator $I:L^{1}%
\left[ 0,1\right] \rightarrow L^{1}\left[ 0,1\right] $ (resp. $I:\ell
^{1}\rightarrow \ell ^{1}$) is weak almost limited but it fail to be weak*
Dunford-Pettis (resp. almost limited).

In terms of weakly compact and almost limited operators the weak almost
limited operators are characterized as follows.

\begin{theorem}
\label{aw*DP}For an operator $T:X\rightarrow E$, the following assertions
are equivalents:

\begin{enumerate}
\item $T$ is weak almost limited.

\item If $S:Z\rightarrow X$ is a weakly compact operator, where $Z$ is an
arbitrary Banach space, then the operator $T\circ S$ is almost limited.

\item If $S:\ell ^{1}\rightarrow X$ is a weakly compact operator then the
operator $T\circ S$ is almost limited.

\item For every weakly null sequence $\left( x_{n}\right) \subset X$ and
every disjoint weak* null sequence $\left( f_{n}\right) \subset E^{\ast }$
we have $f_{n}\left( Tx_{n}\right) \rightarrow 0$.
\end{enumerate}
\end{theorem}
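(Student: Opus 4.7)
The plan is to prove the cyclic chain $(1)\Rightarrow(2)\Rightarrow(3)\Rightarrow(4)\Rightarrow(1)$. The first two arrows are essentially by unpacking definitions; the main technical work is concentrated in $(3)\Rightarrow(4)$ (where I need to realize a weakly null sequence as the image of the unit vectors under a weakly compact operator from $\ell^{1}$) and in $(4)\Rightarrow(1)$ (where I need to promote the pointwise statement to a uniform statement on weakly compact sets).

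For $(1)\Rightarrow(2)$, if $S:Z\to X$ is weakly compact then $S(B_{Z})$ is relatively weakly compact in $X$, so by hypothesis $T(S(B_{Z}))=(T\circ S)(B_{Z})$ is an almost limited set in $E$; hence $T\circ S$ is an almost limited operator. The implication $(2)\Rightarrow(3)$ is the obvious specialization $Z=\ell^{1}$.

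For $(3)\Rightarrow(4)$, given a weakly null sequence $(x_{n})\subset X$ and a disjoint weak$^{*}$ null sequence $(f_{n})\subset E^{\ast }$, I will use the standard construction: the operator $S:\ell^{1}\to X$ defined by $S((\alpha_{n}))=\sum_{n}\alpha_{n}x_{n}$ is well-defined (since $(x_{n})$ is norm bounded) and weakly compact — this is the classical fact that an operator out of $\ell^{1}$ is weakly compact iff it sends the canonical basis to a relatively weakly compact set. By hypothesis (3), $T\circ S$ is almost limited, so $(T\circ S)(B_{\ell^{1}})$ is an almost limited set in $E$, which means $\sup_{u\in B_{\ell^{1}}}|f_{n}(TSu)|\to 0$. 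Taking $u=e_{n}$ gives $|f_{n}(Tx_{n})|\to 0$, as required.

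For $(4)\Rightarrow(1)$, I argue by contradiction. Suppose $A\subset X$ is relatively weakly compact but $T(A)$ is not almost limited; then there exist $\varepsilon>0$, a disjoint weak$^{*}$ null sequence $(f_{n})\subset E^{\ast }$, and points $x_{n}\in A$ with $|f_{n}(Tx_{n})|\geq \varepsilon$ for every $n$. Using the Eberlein--\v{S}mulian theorem, extract a subsequence (still denoted $(x_{n})$) that converges weakly to some $x\in X$. Splitting
\[
f_{n}(Tx_{n}) = f_{n}\bigl(T(x_{n}-x)\bigr) + f_{n}(Tx),
\]
the second term tends to $0$ because $f_{n}\xrightarrow{w^{\ast}}0$, and the first term tends to $0$ by applying (4) to the weakly null sequence $(x_{n}-x)$ together with the still disjoint and weak$^{*}$ null sequence $(f_{n})$. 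This contradicts $|f_{n}(Tx_{n})|\geq \varepsilon$ and finishes the proof. The only delicate point is making sure that the subsequence extraction preserves the disjointness of $(f_{n})$, which is automatic since disjointness is inherited by subsequences.
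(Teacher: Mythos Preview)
Your proof is correct and follows essentially the same cyclic scheme $(1)\Rightarrow(2)\Rightarrow(3)\Rightarrow(4)\Rightarrow(1)$ as the paper, with the same key ingredients (the $\ell^{1}$ operator $S((\alpha_{n}))=\sum_{n}\alpha_{n}x_{n}$ for $(3)\Rightarrow(4)$ and the Eberlein--\v{S}mulian subsequence plus splitting argument for $(4)\Rightarrow(1)$). The only noticeable difference is that your $(1)\Rightarrow(2)$ is the clean one-line observation that $S(B_{Z})$ is relatively weakly compact so $(T\circ S)(B_{Z})=T(S(B_{Z}))$ is almost limited, whereas the paper argues this step by contradiction via the adjoint; your version is slightly more direct but the content is the same.
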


\begin{proof}
$\left( 1\right) \Rightarrow \left( 2\right) $ Let $\left( f_{n}\right)
\subset E^{\ast }$ be a disjoint weak* null sequence. We shall proof that $%
\left\Vert \left( T\circ S\right) ^{\ast }\left( f_{n}\right) \right\Vert
\rightarrow 0$. Otherwise, by choosing a subsequence we may suppose that
there is $\varepsilon $ with $\left\Vert \left( T\circ S\right) ^{\ast
}\left( f_{n}\right) \right\Vert >\varepsilon >0$ for all $n\in \mathbb{N}$.
So for every $n$ there exists some $x_{n}\in B_{Z}$ with $\left( T\circ
S\right) ^{\ast }\left( f_{n}\right) \left( x_{n}\right) =f_{n}\left(
T\left( S\left( x_{n}\right) \right) \right) \geq \varepsilon $.

On the other hand, as $S$ is weakly compact, $\left\{ S\left( x_{k}\right)
:k\in \mathbb{N}\right\} $ is a relatively weakly compact subset of $X$ .
Then $\left\{ T\left( S\left( x_{k}\right) \right) :k\in \mathbb{N}\right\} $
is an almost limited set in $E$ (as $T$ is weak almost limited). So $\sup
\left\{ \left\vert f_{n}\left( T\left( S\left( x_{k}\right) \right) \right)
\right\vert :k\in \mathbb{N}\right\} \rightarrow 0$ as $n\rightarrow \infty $%
. But this implies that $f_{n}\left( T\left( S\left( x_{n}\right) \right)
\right) \rightarrow 0$, which is impossible. Thus, $\left\Vert \left( T\circ
S\right) ^{\ast }\left( f_{n}\right) \right\Vert \rightarrow 0$, and hence $%
T\circ S$ is almost limited.

$\left( 2\right) \Rightarrow \left( 3\right) $ Obvious.

$\left( 3\right) \Rightarrow \left( 4\right) $ Let $\left( x_{n}\right)
\subset X$ be a weakly null sequence and let $\left( f_{n}\right) \subset
E^{\ast }$ be a disjoint weak* null sequence. By Theorem 5.26 \cite{AB3},
the operator $S:\ell ^{1}\rightarrow X$ defined by $S\left( \left( \lambda
_{i}\right) \right) =\sum_{i=1}^{\infty }\lambda _{i}x_{i}$, is weakly
compact. Thus, by our hypothesis $T\circ S$ is almost limited and hence $%
\left\Vert \left( T\circ S\right) ^{\ast }\left( f_{n}\right) \right\Vert
\rightarrow 0$. But 
\begin{eqnarray*}
\left\Vert \left( T\circ S\right) ^{\ast }\left( f_{n}\right) \right\Vert
&=&\sup \left\{ \left\vert f_{n}\left( T\left( S\left( \left( \lambda
_{i}\right) \right) \right) \right) \right\vert :\left( \lambda _{i}\right)
\in B_{\ell ^{1}}\right\} \\
&\geq &\left\vert f_{n}\left( T\left( S\left( e_{n}\right) \right) \right)
\right\vert =\left\vert f_{n}\left( T\left( x_{n}\right) \right) \right\vert
\end{eqnarray*}%
for every $n$, where $\left( e_{n}\right) $ is the canonical basis of $\ell
^{1}$. Then $f_{n}\left( T\left( x_{n}\right) \right) \rightarrow 0$, as
desired.

$\left( 4\right) \Rightarrow \left( 1\right) $ Let $W$ be a relatively
weakly compact subset of $X$. If $T\left( W\right) $ is not almost limited
set in $E$ then there exists a disjoint weak* null sequence $\left(
f_{n}\right) \subset E^{\ast }$ such that $\sup \left\{ \left\vert
f_{n}\left( T\left( x\right) \right) \right\vert :x\in W\right\}
\nrightarrow 0$. By choosing a subsequence we may suppose that there is $%
\varepsilon $ with $\sup \left\{ \left\vert f_{n}\left( T\left( x\right)
\right) \right\vert :x\in W\right\} >\varepsilon >0$ for all $n\in \mathbb{N}
$. So for every $n$ there exists some $x_{n}\in W$ with $\left\vert
f_{n}\left( T\left( x_{n}\right) \right) \right\vert \geq \varepsilon $.

On the other hand, since $W$ is a relatively weakly compact subset of $X$,
there exists a subsequence $\left( x_{n_{k}}\right) $ of $\left(
x_{n}\right) $ such that $x_{n_{k}}\overset{w}{\rightarrow }x$ holds in $X$.
By hypothesis, $f_{n_{k}}\left( T\left( x_{n_{k}}-x\right) \right)
\rightarrow 0$ and clearly $f_{n_{k}}\left( T\left( x\right) \right)
\rightarrow 0$. Now from $f_{n_{k}}\left( T\left( x_{n_{k}}\right) \right)
=f_{n_{k}}\left( T\left( x_{n_{k}}-x\right) \right) +f_{n_{k}}\left( T\left(
x\right) \right) $ we see that $f_{n_{k}}\left( T\left( x_{n_{k}}\right)
\right) \rightarrow 0$, which is impossible. Thus, $T\left( W\right) $ is an
almost limited set in $E$, and so $T$ is weak almost limited.
\end{proof}

\begin{remark}
\label{Rem1}Every operator $T:X\rightarrow E$ that admits a factorization
through the Banach lattice $\ell ^{\infty }$, is weak almost limited.

In fact, let $R:X\rightarrow \ell ^{\infty }$ and $S:\ell ^{\infty
}\rightarrow E$ be two operators such that $T=S\circ R$. Let $\left(
x_{n}\right) \subset X$ be a weakly null sequence and let $\left(
f_{n}\right) \subset E^{\ast }$ be a disjoint weak* null sequence. Clearly $%
R\left( x_{n}\right) \overset{w}{\rightarrow }0$ holds in $\ell ^{\infty }$\
and $S^{\ast }f_{n}\overset{w^{\ast }}{\rightarrow }0$ holds in $\left( \ell
^{\infty }\right) ^{\ast }$. Since $\ell ^{\infty }$ has the Dunford-Pettis*
property then $f_{n}\left( Tx_{n}\right) =\left( S^{\ast }f_{n}\right)
\left( R\left( x_{n}\right) \right) \rightarrow 0$. Thus $T$ is weak almost
limited.
\end{remark}

The next result characterizes, under some conditions, the order bounded weak
almost limited operators between two Banach lattices.

\begin{theorem}
\label{regulieraw*DP} Let $E$ and $F$ be two Banach lattices such that the
lattice operations of $E^{\ast }$ are sequentially weak* continuous or $F$
satisfy the property $\mathrm{(d)}$. Then for an order bounded operator $%
T:E\rightarrow F$, the following assertions are equivalents:

\begin{enumerate}
\item $T$ is weak almost limited.

\item For every weakly null sequence $\left( x_{n}\right) \subset E^{+}$ and
every disjoint weak* null sequence $\left( f_{n}\right) \subset F^{\ast }$
we have $f_{n}\left( Tx_{n}\right) \rightarrow 0$.

\item For every disjoint weakly null sequence $\left( x_{n}\right) \subset E$
and every disjoint weak* null sequence $\left( f_{n}\right) \subset F^{\ast
} $ we have $f_{n}\left( Tx_{n}\right) \rightarrow 0$.

\item For every disjoint weakly null sequence $\left( x_{n}\right) \subset
E^{+}$ and every disjoint weak* null sequence $\left( f_{n}\right) \subset
F^{\ast }$ we have $f_{n}\left( Tx_{n}\right) \rightarrow 0$.

\item $T$ carries the solid hull of each relatively weakly compact subset of 
$E$ to an almost limited subset of $F$.
\end{enumerate}

If $F$ has the property $(\text{d})$, we may add:

\begin{enumerate}
\item[$\left( 6\right) $] $f_{n}\left( T\left( x_{n}\right) \right)
\rightarrow 0$ for every weakly null sequence $\left( x_{n}\right) \subset
E^{+}$ and every disjoint weak$^{\ast }$ null sequence $\left( f_{n}\right)
\subset \left( F^{\ast }\right) ^{+}$.

\item[$\left( 7\right) $] $f_{n}\left( T\left( x_{n}\right) \right)
\rightarrow 0$ for every disjoint weakly null sequence $\left( x_{n}\right)
\subset E^{+}$ and every disjoint weak$^{\ast }$ null sequence $\left(
f_{n}\right) \subset \left( F^{\ast }\right) ^{+}$.
\end{enumerate}
\end{theorem}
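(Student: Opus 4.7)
The plan is to establish the ring of equivalences via
$(1)\Rightarrow(2),(3),(4),(5)$, the trivial $(2)\Rightarrow(4)$, $(3)\Rightarrow(4)$ and $(5)\Rightarrow(1)$, and the substantive reduction $(4)\Rightarrow(1)$. By Theorem \ref{aw*DP}(4), condition (1) is equivalent to the statement that $f_{n}(Tx_{n})\to 0$ for \emph{every} weakly null sequence $(x_{n})\subset E$ and \emph{every} disjoint weak* null sequence $(f_{n})\subset F^{\ast}$; hence (1) immediately gives (2), (3), (4). The implications $(2)\Rightarrow(4)$ and $(3)\Rightarrow(4)$ are set-theoretic: a disjoint positive weakly null sequence is, in particular, both a positive weakly null sequence and a disjoint weakly null sequence. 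Likewise $(5)\Rightarrow(1)$ is free, because $W\subset \mathrm{sol}(W)$ and any subset of an almost limited set is almost limited.

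For $(4)\Rightarrow(1)$, I would argue by contradiction using Theorem \ref{aw*DP}(4). Suppose there exist a weakly null $(x_{n})\subset E$, a disjoint weak* null $(f_{n})\subset F^{\ast}$, and $\varepsilon>0$ with $|f_{n}(Tx_{n})|\geq \varepsilon$. Decompose $x_{n}=x_{n}^{+}-x_{n}^{-}$; after passing to a subsequence I may assume without loss of generality that $|f_{n}(Tx_{n}^{+})|\geq \varepsilon /2$ (the case of $x_{n}^{-}$ being symmetric). The task is then to promote $(x_{n}^{+})$ to a \emph{disjoint} positive weakly null sequence still witnessing $|f_{n}(T\,\cdot\,)|\geq \varepsilon '>0$, directly contradicting (4). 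Here is where one of the two structural hypotheses is used: under property $(\mathrm{d})$ on $F$, the sequence $(|f_{n}|)$ is again disjoint, positive and weak* null, which allows one to control $f_{n}\circ T$ on order intervals $[0,x_{n}^{+}]$ using the order boundedness of $T$; under sequential weak* continuity of the lattice operations on $E^{\ast}$, the disjointifying operation can be transported across $T^{\ast}$. In either setting, a Kadec--Pe\l czy\'nski--type disjointification combined with the order boundedness of $T$ extracts the required disjoint positive weakly null sequence.

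For $(1)\Rightarrow(5)$, suppose $T(\mathrm{sol}(W))$ fails to be almost limited for some relatively weakly compact $W\subset E$; pick $y_{n}\in \mathrm{sol}(W)$, $x_{n}\in W$ with $|y_{n}|\leq |x_{n}|$, and disjoint weak* null $(f_{n})\subset F^{\ast}$ with $|f_{n}(Ty_{n})|\geq \varepsilon$. Passing to a weakly convergent subsequence of $(x_{n})$ and applying the same disjointification machinery as above, one produces a disjoint positive weakly null sequence in $E$ on which $(f_{n})$ pairs with $T$ to something bounded below---contradicting condition (4), which has just been shown equivalent to (1). Finally, the equivalences $(2)\Leftrightarrow(6)$ and $(4)\Leftrightarrow(7)$ under property $(\mathrm{d})$ are immediate: property $(\mathrm{d})$ guarantees that $(|f_{n}|)$ is disjoint, positive and weak* null whenever $(f_{n})$ is disjoint and weak* null, so restricting the test sequences $(f_{n})$ to the positive cone of $F^{\ast}$ loses no information.

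\textbf{Main obstacle.} The principal difficulty lies in the disjointification step underpinning $(4)\Rightarrow(1)$ (and $(1)\Rightarrow(5)$): manufacturing a genuine disjoint positive weakly null sequence in $E$ out of an arbitrary weakly null sequence tested against a disjoint weak* null sequence in $F^{\ast}$. The order boundedness of $T$ is essential to transfer disjointness between $E$ and $F^{\ast}$ sides, and the two alternative hypotheses---acting respectively on the dual of the domain and on the codomain---are precisely what allow this transfer to go through.
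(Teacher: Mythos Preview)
Your overall architecture is sound, but the route you take through the implications is harder than necessary and leaves the central step unjustified. The paper does \emph{not} prove $(4)\Rightarrow(1)$ directly; it goes $(4)\Rightarrow(5)\Rightarrow(1)$, and this reordering is the whole point. For $(4)\Rightarrow(5)$ the paper uses two off-the-shelf facts: first, by \cite[Theorem~4.34]{AB3}, every disjoint sequence in $\bigl(\mathrm{sol}(W)\bigr)^{+}$ for relatively weakly compact $W$ is automatically weakly null, so hypothesis~(4) applies to any such sequence; second, Theorem~2.7 of \cite{Mach} (which is exactly where the alternative hypotheses on $E^{\ast}$ or $F$ enter) says that for an order bounded $T$, the set $T(A)$ with $A$ solid is almost limited as soon as $f_{n}(Tz_{n})\to 0$ for every disjoint $(z_{n})\subset A^{+}$ and every disjoint weak* null $(f_{n})\subset F^{\ast}$. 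These two lemmas together give $(4)\Rightarrow(5)$ in two lines, and $(5)\Rightarrow(1)$ is trivial.

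By contrast, your direct attack on $(4)\Rightarrow(1)$ has a genuine gap. After splitting $x_{n}=x_{n}^{+}-x_{n}^{-}$, the sequence $(x_{n}^{+})$ is \emph{not} weakly null in general, so you cannot immediately feed it into (2) or (4); you acknowledge this and propose to ``promote $(x_{n}^{+})$ to a disjoint positive weakly null sequence still witnessing $|f_{n}(T\,\cdot\,)|\geq\varepsilon'$'' via a Kadec--Pe\l czy\'nski-type argument. That promotion is precisely the nontrivial content---it is essentially what Theorem~2.7 of \cite{Mach} establishes---and your sketch does not explain how the lower bound on $|f_{n}(T\,\cdot\,)|$ survives the disjointification, nor how the two structural hypotheses are concretely used in that step. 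Your plan for $(1)\Rightarrow(5)$ suffers the same vagueness (and is in fact the paper's $(4)\Rightarrow(5)$ in disguise, since you invoke (4) at the end). The remaining implications, including $(6)\Leftrightarrow(2)$ and $(7)\Leftrightarrow(4)$ via $f_{n}=f_{n}^{+}-f_{n}^{-}$ under property~$(\mathrm{d})$, match the paper.
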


\begin{proof}
$\left( 1\right) \Rightarrow \left( 2\right) $ and $\left( 1\right)
\Rightarrow \left( 3\right) $ Follows from Theorem \ref{aw*DP}.

$\left( 2\right) \Rightarrow \left( 4\right) $ and $\left( 3\right)
\Rightarrow \left( 4\right) $ are obvious.

$\left( 4\right) \Rightarrow \left( 5\right) $ Let $W$ be a relatively
weakly compact subset of $E$ and let $\left( f_{n}\right) \subset F^{\ast }$
be a disjoint weak* null sequence. Put $A=\mathrm{sol}\left( W\right) $ and
note that if $\left( z_{n}\right) \subset A^{+}:=A\cap E^{+}$ is a disjoint
sequence then by Theorem 4.34 of \cite{AB3} $z_{n}\overset{w}{\rightarrow }0$%
. Thus, by our hypothesis $f_{n}\left( Tz_{n}\right) \rightarrow 0$ for
every disjoint sequence $\left( z_{n}\right) \subset A^{+}$ and every
disjoint weak* null sequence $\left( f_{n}\right) \subset F^{\ast }$. Now,
by Theorem 2.7 of \cite{Mach} $T\left( A\right) $ is almost limited.

$\left( 5\right) \Rightarrow \left( 1\right) $ Obvious.

$\left( 2\right) \Rightarrow \left( 6\right) $ and $\left( 4\right)
\Rightarrow \left( 7\right) $ are obvious.

$\left( 6\right) \Rightarrow \left( 2\right) $ and $\left( 7\right)
\Rightarrow \left( 4\right) $ Let $\left( x_{n}\right) \subset E^{+}$ be a
weakly null (resp. disjoint weakly null) sequence and let $\left(
f_{n}\right) \subset F^{\ast }$ be a disjoint weak* null sequence.

If $F$ has the property $(\text{d})$ then $\left\vert f_{n}\right\vert 
\overset{w^{\ast }}{\rightarrow }0$. So from the inequalities $%
f_{n}^{\,+}\leq \left\vert f_{n}\right\vert $ and $f_{n}^{\,-}\leq
\left\vert f_{n}\right\vert $, the sequences $(f_{n}^{\,+})$, $(f_{n}^{\,-})$
are weak* null. Finally, by $\left( 6\right) $ (resp. $\left( 7\right) $), $%
\lim f_{n}\left( T\left( x_{n}\right) \right) =\lim \left[ f_{n}^{\,+}\left(
T\left( x_{n}\right) \right) -f_{n}^{\,-}\left( T\left( x_{n}\right) \right) %
\right] =0$.
\end{proof}

As consequence of Theorem \ref{regulieraw*DP} we obtain the following
characterization of the wDP* property which is a generalization of Theorem
3.2 of \cite{chen}.

\begin{corollary}
Let $E$ be a Banach lattice with the property $\mathrm{(d)}$. Then the
following assertions are equivalents:

\begin{enumerate}
\item $E$ has the wDP* property.

\item For every weakly null sequence $\left( x_{n}\right) \subset E^{+}$ and
every disjoint weak* null sequence $\left( f_{n}\right) \subset E^{\ast }$
we have $f_{n}\left( x_{n}\right) \rightarrow 0$.

\item For every disjoint weakly null sequence $\left( x_{n}\right) \subset E$
and every disjoint weak* null sequence $\left( f_{n}\right) \subset E^{\ast
} $ we have $f_{n}\left( x_{n}\right) \rightarrow 0$.

\item For every disjoint weakly null sequence $\left( x_{n}\right) \subset
E^{+}$ and every disjoint weak* null sequence $\left( f_{n}\right) \subset
E^{\ast }$ we have $f_{n}\left( x_{n}\right) \rightarrow 0$.

\item The solid hull of every relatively weakly compact set in $E$ is almost
limited.

\item $f_{n}\left( x_{n}\right) \rightarrow 0$ for every weakly null
sequence $\left( x_{n}\right) \subset E^{+}$ and every disjoint weak* null
sequence $\left( f_{n}\right) \subset \left( E^{\ast }\right) ^{+}$.

\item $f_{n}\left( x_{n}\right) \rightarrow 0$ for every disjoint weakly
null sequence $\left( x_{n}\right) \subset E^{+}$ and every disjoint weak*
null sequence $\left( f_{n}\right) \subset \left( E^{\ast }\right) ^{+}$.
\end{enumerate}
\end{corollary}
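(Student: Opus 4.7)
The plan is to derive this corollary as an immediate specialization of Theorem \ref{regulieraw*DP} to the identity operator. Recall from the paragraph following Definition \ref{def} that $E$ has the wDP* property if and only if the identity $I:E\to E$ is a weak almost limited operator. Since $I$ is obviously order bounded, and since the hypothesis of the corollary says that $E$ (playing the role of $F$ in Theorem \ref{regulieraw*DP}) has the property $\mathrm{(d)}$, all seven equivalences listed in the theorem apply to $T=I$.

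Concretely, I would just substitute $T=I_{E}$ and $F=E$ into the seven equivalent conditions of Theorem \ref{regulieraw*DP}. Condition (1) there becomes ``$I$ is weak almost limited,'' which by the remark above is exactly the wDP* property, i.e.\ condition (1) of the corollary. Conditions (2)--(4) of the theorem translate directly into conditions (2)--(4) of the corollary after replacing $T(x_{n})$ by $x_{n}$. Condition (5) of the theorem, ``$T$ carries the solid hull of each relatively weakly compact subset of $E$ to an almost limited subset of $F$,'' becomes exactly condition (5) of the corollary since $I(\mathrm{sol}(W))=\mathrm{sol}(W)$. Because $E$ has property $\mathrm{(d)}$, the theorem also yields the two extra equivalent conditions involving positive disjoint weak* null sequences in $E^{\ast}$, which are conditions (6) and (7) of the corollary.

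There is no real obstacle here: the corollary is a pure instantiation. The only things to double-check are the bookkeeping (matching the seven conditions to the theorem's five plus two) and that the hypothesis ``$F$ satisfies property $\mathrm{(d)}$'' in Theorem \ref{regulieraw*DP} is met by taking $F=E$ with the given hypothesis of the corollary. Once those are in place, the proof reduces to a single sentence invoking Theorem \ref{regulieraw*DP} with $T=I_{E}$.
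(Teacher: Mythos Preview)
Your proposal is correct and matches the paper's approach: the paper states the corollary as a direct consequence of Theorem \ref{regulieraw*DP} and gives no separate proof, precisely because it is the instantiation $T=I_{E}$, $F=E$ that you describe. The bookkeeping you spell out (matching conditions (1)--(7) and using that $E$ has property $\mathrm{(d)}$) is exactly what the paper leaves implicit.
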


Recently, the authors in \cite{chen2} demonstrated that if a positive weak*
Dunford-Pettis operator $T:E\rightarrow F$ has its range in $\sigma $%
-Dedekind complete Banach lattice, then every positive operator $%
S:E\rightarrow F$ that it dominates (i.e., $0\leq S\leq T$) is also weak*
Dunford-Pettis \cite[Theorem 3.1]{chen2}. For the positive weak almost
limited operators, the situation still hold when $F$ satisfy the property
(d).

\begin{corollary}
\label{domination}Let $E$ and $F$ be two Banach lattices such that $F$
satisfy the property $\mathrm{(d)}$. Let $S,\ T:E\rightarrow F$ be two
positive operators such that $0\leq S\leq T$. Then $S$ is a weak almost
limited operator whenever $T$ is one.
\end{corollary}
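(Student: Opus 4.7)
The plan is to reduce this to the equivalent characterization provided by Theorem \ref{regulieraw*DP}, exploiting the fact that both $S$ and $T$ are positive (hence order bounded) and that $F$ satisfies property $(\mathrm{d})$. Under these hypotheses, condition $(7)$ of Theorem \ref{regulieraw*DP} is available as a test for an operator to be weak almost limited, and this condition is well-suited to domination arguments because it involves only positive objects on both the domain and the dual side.

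Concretely, I would let $(x_n) \subset E^{+}$ be an arbitrary disjoint weakly null sequence and $(f_n) \subset (F^{\ast})^{+}$ be an arbitrary disjoint weak* null sequence, and aim to show $f_n(Sx_n) \to 0$. Since $0 \le S \le T$ and $x_n \ge 0$, we have $0 \le Sx_n \le Tx_n$, and since $f_n \ge 0$, this yields the pointwise estimate
\[
0 \le f_n(Sx_n) \le f_n(Tx_n).
\]
Now $T$ is weak almost limited and order bounded with $F$ having property $(\mathrm{d})$, so Theorem \ref{regulieraw*DP}, applied to $T$ in the direction $(1)\Rightarrow(7)$, gives $f_n(Tx_n) \to 0$. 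Sandwiching yields $f_n(Sx_n) \to 0$.

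Having verified condition $(7)$ for $S$, I would invoke Theorem \ref{regulieraw*DP} in the opposite direction $(7)\Rightarrow(1)$ (again using that $S$ is order bounded and $F$ has property $(\mathrm{d})$) to conclude that $S$ is weak almost limited. There is no real obstacle here beyond lining up the hypotheses: the genuine work was already done in establishing the equivalence $(1)\Leftrightarrow(7)$ of Theorem \ref{regulieraw*DP}, which is precisely what allows the comparison $0 \le Sx_n \le Tx_n$ together with positivity of $(f_n)$ to transport the weak almost limitedness from $T$ to $S$. The role of property $(\mathrm{d})$ is exactly to supply this positive-dual reformulation; without it one could not replace an arbitrary disjoint weak* null sequence in $F^{\ast}$ by a positive one and the sandwiching step would fail.
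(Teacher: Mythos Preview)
Your proposal is correct and follows essentially the same approach as the paper's own proof: both reduce to condition $(7)$ of Theorem \ref{regulieraw*DP}, use the sandwich $0 \le f_n(Sx_n) \le f_n(Tx_n) \to 0$ for disjoint weakly null $(x_n)\subset E^{+}$ and disjoint weak* null $(f_n)\subset (F^{\ast})^{+}$, and then apply the equivalence $(7)\Leftrightarrow(1)$ back to $S$. Your write-up is simply more explicit about the two directions of the equivalence and the role of property $(\mathrm{d})$.
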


\begin{proof}
Follows immediately from Theorem \ref{regulieraw*DP} by noting that $0\leq
f_{n}\left( Sx_{n}\right) \leq f_{n}\left( Tx_{n}\right) \rightarrow 0$ for
every disjoint weakly null sequence $\left( x_{n}\right) \subset E^{+}$ and
every disjoint weak* null sequence $\left( f_{n}\right) \subset \left(
E^{\ast }\right) ^{+}$.
\end{proof}

Note that, clearly, every almost limited operator $T:X\rightarrow E$, from a
Banach space into a Banach lattice, is weak almost limited. But the converse
is not true in general. Indeed, the identity operator $I:\ell
^{1}\rightarrow \ell ^{1}$ is Dunford-Pettis (and hence weak almost limited)
but it is not almost limited.

The next result characterizes pairs of Banach lattices $E$, $F$ for which
every positive weak almost limited operator $T:E\rightarrow F$ is almost
limited.

\begin{theorem}
\label{awstDPAlimited}Let $E$ and $F$ be two Banach lattices such that $F$
has the property $\mathrm{(d)}$. Then, the following statements are
equivalents:

\begin{enumerate}
\item Every order bounded weak almost limited operator $T:E\rightarrow F$ is
almost limited.

\item Every positive weak almost limited operator $T:E\rightarrow F$ is
almost limited.

\item One of the following assertions is valid:

\begin{enumerate}
\item[\textrm{(i)}] $F$ has the positive dual Schur property.

\item[\textrm{(ii)}] The norm of $E^{\ast }$ is order continuous.
\end{enumerate}
\end{enumerate}
\end{theorem}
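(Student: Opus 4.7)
The implication $(1)\Rightarrow(2)$ is immediate, since positive operators are order bounded. For $(3)\Rightarrow(1)$, let $T:E\to F$ be an order bounded weak almost limited operator. In case~(i), any disjoint weak$^{\ast}$ null $(f_n)\subset F^{\ast}$ satisfies $|f_n|\overset{w^{\ast}}{\to}0$ by property~(d), so the positive dual Schur property forces $\|f_n\|=\||f_n|\|\to 0$ and hence $\|T^{\ast}f_n\|\to 0$; thus every bounded operator is already almost limited. In case~(ii), I would apply the characterization of almost limited sets from \cite[Theorem~2.7]{Mach}: since $B_E$ is solid and bounded, $T(B_E)$ is almost limited as soon as $f_n(Tz_n)\to 0$ for every disjoint $(z_n)\subset B_E^{\,+}$ and every disjoint weak$^{\ast}$ null $(f_n)\subset F^{\ast}$. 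Order continuity of $E^{\ast}$ makes any such $(z_n)$ weakly null (Meyer--Nieberg, \cite{MN}), and Theorem~\ref{regulieraw*DP}\,(2), applicable because $T$ is order bounded and $F$ has property~(d), then yields $f_n(Tz_n)\to 0$.

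For the main implication $(2)\Rightarrow(3)$ I would argue by contraposition, assuming both parts of (3) fail. The failure of order continuity of $E^{\ast}$ supplies an order bounded disjoint sequence $(f_n)\subset[0,h]\subset(E^{\ast})^{+}$ with $\|f_n\|>\varepsilon$, and elements $x_n\in B_E^{\,+}$ with $f_n(x_n)>\varepsilon/2$. The failure of positive dual Schur for $F$, combined with \cite[Proposition~2.3]{WN2012}, supplies a disjoint weak$^{\ast}$ null $(g_n)\subset(F^{\ast})^{+}$ with $\|g_n\|>1$ and elements $y_n\in B_F^{\,+}$ with $g_n(y_n)\geq\tfrac{1}{2}$. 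I would then define
\[
Tx=\sum_{n=1}^{\infty}f_n(x)\,y_n,
\]
which is absolutely convergent in $F$ because the disjointness of $(f_n)$ in $[0,h]$ forces $\sum_n f_n(|x|)\leq h(|x|)$ while $\|y_n\|\leq 1$; hence $T$ is a positive bounded operator with $\|T\|\leq\|h\|$.

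To see that $T$ is weak almost limited I would verify condition~(6) of Theorem~\ref{regulieraw*DP}: for any weakly null $(z_k)\subset E^{\,+}$ and any disjoint weak$^{\ast}$ null $(g'_k)\subset(F^{\ast})^{+}$,
\[
|g'_k(Tz_k)|\leq\sum_n f_n(z_k)\,g'_k(y_n)\leq\|g'_k\|\,h(z_k)\to 0,
\]
using $h\in E^{\ast}$ and $z_k\overset{w}{\to}0$. To see that $T$ is not almost limited, positivity of every term yields
\[
(T^{\ast}g_k)(x_k)=\sum_n g_k(y_n)\,f_n(x_k)\geq g_k(y_k)\,f_k(x_k)\geq\tfrac{\varepsilon}{4},
\]
so $\|T^{\ast}g_k\|\geq\varepsilon/4$ for all $k$, contradicting~(2).

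The main conceptual step is the choice of the ansatz $Tx=\sum_n f_n(x)y_n$: the shared dominating functional $h$ simultaneously makes the series norm-convergent and drives the weak almost limited check via $h(z_k)\to 0$, while positivity rules out any off-diagonal cancellation in the lower bound for $(T^{\ast}g_k)(x_k)$. The remaining steps are routine: extracting $(f_n)$ from the failure of order continuity of $E^{\ast}$ is a standard Meyer--Nieberg exhaustion, and the disjoint version of $(g_n)$ is provided by \cite[Proposition~2.3]{WN2012}.
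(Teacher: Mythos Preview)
Your proof is correct and follows essentially the same route as the paper: the same counterexample $Tx=\sum_n f_n(x)\,y_n$ for $(2)\Rightarrow(3)$ and the same reductions via property~(d) and Theorem~\ref{regulieraw*DP} for $(3)\Rightarrow(1)$. The only minor variation is that the paper verifies $T$ is weak almost limited by observing it factors as $E\to\ell^1\to F$ (so the Schur property of $\ell^1$ makes $T$ Dunford--Pettis), whereas you check condition~(6) of Theorem~\ref{regulieraw*DP} directly via the estimate $g'_k(Tz_k)\le\|g'_k\|\,h(z_k)\to 0$.
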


\begin{proof}
$\left( 1\right) \Rightarrow \left( 2\right) $ Obvious.

$\left( 2\right) \Rightarrow \left( 3\right) $ Assume by way of
contradiction that $F$ does not have the positive dual Schur property and
the norm of $E^{\ast }$ is not order continuous. We have to construct a
positive weak almost limited operator $T:E\rightarrow F$ which is not almost
limited. To this end, since the norm of $E^{\ast }$ is not order continuous,
there exists a disjoint sequence $(f_{n})\subset \left( E^{\ast }\right)
^{+} $ satisfying $\left\Vert f_{n}\right\Vert =1$ and $0\leq f_{n}\leq f$
for all $n$ and for some $f\in \left( E^{\ast }\right) ^{+}$(see Theorem
4.14 of \cite{AB3}).

On the other hand, since $F$ does not have the positive dual Schur property,
then there is a disjoint weak* null sequence $(g_{n})\subset (F^{\ast })^{+}$
such that $(g_{n})$ is not norm null. By choosing a subsequence we may
suppose that there is $\varepsilon $ with $\left\Vert g_{n}\right\Vert
>\varepsilon >0$ for all $n$. From the equality $\left\Vert g_{n}\right\Vert
=\sup \{g_{n}(y):y\in B_{F}^{+}\}$, there exists a sequence $\left(
y_{n}\right) \subset B_{F}^{+}$ such that $g_{n}(y_{n})\geq \varepsilon $
holds for all $n$.

Now, consider the operators $P:E\rightarrow \ell ^{1}$ and $S:\ell
^{1}\rightarrow F$ defined by 
\begin{equation*}
P(x)=\left( f_{n}(x)\right) _{n}
\end{equation*}%
and 
\begin{equation*}
S\left( (\lambda _{n})_{n}\right) =\sum\limits_{n=1}^{\infty }\lambda
_{n}y_{n}
\end{equation*}%
for each $x\in E$ and each $(\lambda _{n})_{n}\in \ell ^{1}$. Since 
\begin{equation*}
\sum\limits_{n=1}^{N}\left\vert f_{n}\left( x\right) \right\vert \leq
\sum\limits_{n=1}^{N}f_{n}\left( \left\vert x\right\vert \right) =\left(
\vee _{n=1}^{N}f_{n}\right) \left( \left\vert x\right\vert \right) \leq
f\left( \left\vert x\right\vert \right)
\end{equation*}%
for each $x\in E$ and each $N\in \mathbb{N}$, the operator $P$ is well
defined, and clearly $P$ and $S$ are positives. Now, consider the positive
operator $T=S\circ P:E\rightarrow \ell ^{1}\rightarrow F$, and note that $%
T(x)=\sum\limits_{n=1}^{\infty }f_{n}(x)y_{n}$ for each $x\in E$. Clealry,
as $\ell ^{1}$ has the Schur property, then $T$ is Dunford-Pettis and hence $%
T$ is weak almost limited. However, for the disjoint weak* null sequence $%
(g_{n})\subset (F^{\ast })^{+}$, we have for every $n$,%
\begin{equation*}
T^{\ast }\left( g_{n}\right) =\sum\limits_{k=1}^{\infty }g_{n}\left(
y_{k}\right) f_{k}\geq g_{n}\left( y_{n}\right) f_{n}\geq 0\text{.}
\end{equation*}

Thus%
\begin{equation*}
\left\Vert T^{\ast }\left( g_{n}\right) \right\Vert \geq \left\Vert
g_{n}\left( y_{n}\right) f_{n}\right\Vert =g_{n}\left( y_{n}\right) \geq
\varepsilon
\end{equation*}%
for every $n$. This show that $T$ is not almost limited, and we are done.

$(i)\Rightarrow \left( 1\right) $ In this case, every operator $%
T:E\rightarrow F$ is almost limited. In fact,\ let $\left( f_{n}\right)
\subset F^{\ast }$ be a disjoint weak* null sequence. Since $F$ has the
property $(\text{d})$ then the positive disjoint sequence $\left( \left\vert
f_{n}\right\vert \right) \subset F^{\ast }$ is weak* null. So by the
positive dual Schur property of $F$, $\left\Vert f_{n}\right\Vert
\rightarrow 0$, and hence $\left\Vert T^{\ast }\left( f_{n}\right)
\right\Vert \rightarrow 0$, as desired.

$(ii)\Rightarrow \left( 1\right) $ According to Proposition 4.4 of \cite%
{Mach} it is sufficient to show $f_{n}\left( T\left( x_{n}\right) \right)
\rightarrow 0$ for every norm bounded disjoint sequence $\left( x_{n}\right)
\subset E^{+}$ and every disjoint weak* null sequence $\left( f_{n}\right)
\subset \left( F^{\ast }\right) ^{+}$. As the norm of $E^{\ast }$ is order
continuous then every norm bounded disjoint sequence $\left( x_{n}\right)
\subset E^{+}$ is weakly null \cite[Theorem 2.4.14]{MN}. Now, since $T$ is
an order bounded weak almost limited operator then by Theorem \ref%
{regulieraw*DP} $f_{n}\left( T\left( x_{n}\right) \right) \rightarrow 0$ for
every norm bounded disjoint sequence $\left( x_{n}\right) \subset E^{+}$ and
every disjoint weak* null sequence $\left( f_{n}\right) \subset \left(
F^{\ast }\right) ^{+}$. This complete the proof.
\end{proof}

As consequence of Theorem \ref{awstDPAlimited}, we obtain the following
corollary.

\begin{corollary}
For a Banach lattices $E$ with the property $\mathrm{(d)}$, $E^{\ast }$ has
an order continuous norm if and only if every positive weak almost limited
operator $T:E\rightarrow E$ is almost limited.
\end{corollary}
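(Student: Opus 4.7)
The plan is to derive both implications from Theorem \ref{awstDPAlimited} applied with $F=E$, which is legitimate since $E$ has property $\mathrm{(d)}$.

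The direction $(\Rightarrow)$ is immediate from the implication $(3)\mathrm{(ii)}\Rightarrow (2)$ of Theorem \ref{awstDPAlimited}. For the converse $(\Leftarrow)$, the assumption combined with Theorem \ref{awstDPAlimited} gives that either $E$ has the positive dual Schur property, or $E^{\ast}$ has order continuous norm. In the second case we are done, so the real task is to show that the positive dual Schur property of $E$ already forces $E^{\ast}$ to have order continuous norm.

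To do this, I would fix a disjoint order-bounded sequence $(f_n)\subset (E^{\ast})^{+}$, say $0\le f_n\le f$, and show $\|f_n\|\to 0$; by Theorem 4.14 of \cite{AB3} this suffices to conclude that the norm of $E^{\ast}$ is order continuous. Since $E^{\ast}$ is Dedekind complete and $(f_n)$ is disjoint, the partial sums $\sum_{i=1}^{N}f_i=\bigvee_{i=1}^{N}f_i$ are bounded above by $f$ and hence increase in order to some $g\in (E^{\ast})^{+}$. Identifying the order supremum of an increasing sequence in $E^{\ast}$ with its pointwise supremum on $E^{+}$, we get $\sum_{i=1}^{\infty}f_i(x)=g(x)<\infty$ for every $x\in E^{+}$, which forces $f_n(x)\to 0$ on $E^{+}$ and therefore $f_n\overset{w^{\ast}}{\rightarrow}0$. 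The positive dual Schur property of $E$ then yields $\|f_n\|\to 0$, as required. The only mildly subtle step is this standard passage from order boundedness to weak$^{\ast}$ convergence; everything else is a direct appeal to Theorem \ref{awstDPAlimited} and to \cite[Theorem 4.14]{AB3}. Note in particular that property $\mathrm{(d)}$ of $E$ enters only through the hypothesis of Theorem \ref{awstDPAlimited} and plays no further role once we are arguing that $\mathrm{(i)}\Rightarrow \mathrm{(ii)}$.
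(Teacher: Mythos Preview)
Your proposal is correct and follows exactly the paper's approach: apply Theorem \ref{awstDPAlimited} with $F=E$ and then observe that alternative $\mathrm{(i)}$ (positive dual Schur property) already implies alternative $\mathrm{(ii)}$ (order continuity of the norm of $E^{\ast}$). The only difference is that the paper simply asserts this last implication as a known fact, whereas you supply a self-contained proof of it via \cite[Theorem 4.14]{AB3}; your argument for that step is fine.
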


\begin{proof}
Follows from Theorem \ref{awstDPAlimited} by noting that if $E$ has the
positive dual Schur property then the norm of $E^{\ast }$ is order
continuous.
\end{proof}

As the Banach space $\ell ^{1}$ has the Schur property then every operator $%
T:\ell ^{1}\rightarrow E$ is weak almost limited. Another consequence of
Theorem \ref{awstDPAlimited} is the following characterization of the
positive dual Schur property.

\begin{corollary}
A Banach lattices $E$ with the property $\mathrm{(d)}$, has the positive
dual Schur property if and only if every positive operator $T:\ell
^{1}\rightarrow E$ is almost limited.
\end{corollary}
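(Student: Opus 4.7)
The plan is to derive this corollary as a direct specialization of Theorem \ref{awstDPAlimited}, taking $\ell^{1}$ as the source lattice and $E$ as the target lattice (which has property $\mathrm{(d)}$ by hypothesis). The crucial preliminary observation, already noted in the paragraph preceding the corollary, is that every operator $T:\ell^{1}\rightarrow E$ is weak almost limited, because $\ell^{1}$ has the Schur property (so relatively weakly compact sets in $\ell^{1}$ are norm compact, and their images in $E$ are norm compact hence limited hence almost limited).

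For the necessity direction, I would assume that every positive $T:\ell^{1}\rightarrow E$ is almost limited. Combined with the preliminary observation, this says precisely that every positive weak almost limited operator $\ell^{1}\rightarrow E$ is almost limited, which is condition $(2)$ of Theorem \ref{awstDPAlimited} (with source $\ell^{1}$, target $E$). That theorem then forces either $(i)$ $E$ has the positive dual Schur property, or $(ii)$ the norm of $(\ell^{1})^{\ast }=\ell^{\infty }$ is order continuous. Since the norm of $\ell^{\infty }$ is famously not order continuous, only $(i)$ can hold, giving the conclusion.

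For the sufficiency direction, I would argue exactly as in the step $(i)\Rightarrow (1)$ of the proof of Theorem \ref{awstDPAlimited}: given any disjoint weak* null sequence $(f_{n})\subset E^{\ast }$, property $\mathrm{(d)}$ yields $|f_{n}|\xrightarrow{w^{\ast }}0$, and then the positive dual Schur property of $E$ gives $\|f_{n}\|\rightarrow 0$, so $\|T^{\ast }(f_{n})\|\rightarrow 0$ for any bounded operator $T:\ell^{1}\rightarrow E$. Hence every such $T$ (positive or not) is almost limited.

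There is essentially no technical obstacle here; the only point worth checking is that the roles in Theorem \ref{awstDPAlimited} can be filled as above, i.e.\ that the target $E$ carries property $\mathrm{(d)}$ (given) and that the Schur property of $\ell^{1}$ indeed makes every operator out of it weak almost limited (already recorded in the text). Thus the corollary will follow in just a few lines, and can simply be written as an application of Theorem \ref{awstDPAlimited} together with the non-order-continuity of the norm of $\ell^{\infty }$.
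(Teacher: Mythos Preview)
Your proposal is correct and follows exactly the approach intended by the paper: the corollary is presented there without an explicit proof, merely as a consequence of Theorem \ref{awstDPAlimited} together with the observation (made in the preceding sentence) that the Schur property of $\ell^{1}$ renders every operator $\ell^{1}\rightarrow E$ weak almost limited. Your argument---applying Theorem \ref{awstDPAlimited} with source $\ell^{1}$ and target $E$, and ruling out alternative $(ii)$ because $(\ell^{1})^{\ast}=\ell^{\infty}$ lacks an order continuous norm---is precisely the missing few lines.
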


Recall that a reflexive Banach space with the Dunford-Pettis property is
finite dimensional \cite[Theorem 5.83]{AB3}. We can prove a similar result
for Banach lattices as follows.

\begin{proposition}
Let $E$ be a Banach lattice with the wDP* property. If the norms of $E$ and $%
E^{\ast }$ are order continuous then $E$ is finite dimensional.

In particular, a reflexive Banach lattice with the wDP* property is finite
dimensional.
\end{proposition}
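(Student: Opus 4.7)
The plan is to argue by contradiction: assume $E$ is infinite dimensional and construct a weakly null sequence $(x_{n})\subset E^{+}$ together with a pairwise disjoint weak$^{\ast}$ null sequence $(f_{n})\subset (E^{\ast})^{+}$ satisfying $f_{n}(x_{n})=1$ for every $n$. This directly contradicts the wDP$^{\ast}$ property of $E$.

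For the sequence $(x_{n})$: every infinite dimensional Banach lattice contains a pairwise disjoint sequence in $E^{+}$ of norm one (a standard structure result), and order continuity of the norm of $E^{\ast}$ then gives, via Theorem 2.4.14 of \cite{MN}, that every norm bounded disjoint sequence in $E$ is weakly null, so $x_{n}\overset{w}{\rightarrow}0$.

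For the sequence $(f_{n})$: order continuity of the norm of $E$ makes $E$ Dedekind complete, so each principal band is a projection band; let $P_{n}\colon E\rightarrow \{x_{n}\}^{dd}$ be the associated band projection. Using $\|x_{n}\|=\sup \{\phi (x_{n}) : \phi \in (E^{\ast})^{+},\ \|\phi\|\leq 1\}$ together with weak$^{\ast}$ compactness of $B_{E^{\ast}}\cap (E^{\ast})^{+}$, I pick $\phi_{n}\in (E^{\ast})^{+}$ with $\|\phi_{n}\|\leq 1$ and $\phi_{n}(x_{n})=1$, and set $f_{n}=\phi_{n}\circ P_{n}\in (E^{\ast})^{+}$, so that $f_{n}(x_{n})=\phi_{n}(P_{n}(x_{n}))=\phi_{n}(x_{n})=1$. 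For disjointness, $x_{n}\perp x_{m}$ gives $\{x_{m}\}^{dd}\subseteq \{x_{n}\}^{d}$, hence $P_{m}P_{n}=0$ whenever $n\neq m$; for $x\in E^{+}$ the splitting $x=P_{n}(x)+(x-P_{n}(x))$ then yields $f_{n}(x-P_{n}(x))=0$ and $f_{m}(P_{n}(x))=0$, so $(f_{n}\wedge f_{m})(x)=0$. For weak$^{\ast}$ nullity, for each $x\in E^{+}$ the sequence $(P_{n}(x))$ is pairwise disjoint with $\sum_{n=1}^{N}P_{n}(x)\leq x$ for all $N$; order continuity of the norm of $E$ forces $\|P_{n}(x)\|\rightarrow 0$, whence $|f_{n}(x)|\leq \|P_{n}(x)\|\rightarrow 0$, and linearity extends this to all $x\in E$.

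Applying the wDP$^{\ast}$ property to $(x_{n})$ and $(f_{n})$ now forces $f_{n}(x_{n})\rightarrow 0$, contradicting $f_{n}(x_{n})=1$. For the ``in particular'' clause, a reflexive Banach lattice has its closed bounded sets (and hence its order intervals) weakly compact, so both $E$ and $E^{\ast}$ automatically have order continuous norms, reducing it to the main assertion. I expect the main obstacle to be the bookkeeping in the construction of $(f_{n})$, where the two order continuity hypotheses must be seen to play complementary roles: order continuity of $E^{\ast}$ is what makes $(x_{n})$ weakly null, while order continuity of $E$ is what makes $\|P_{n}(x)\|\rightarrow 0$ and hence $(f_{n})$ weak$^{\ast}$ null -- remove either hypothesis and one half of the contradicting pair disappears.
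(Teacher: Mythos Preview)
Your proof is correct and takes a genuinely different route from the paper's.

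The paper's argument is a two-line application of its own machinery: since $E$ has the wDP$^{\ast}$ property, the identity $I:E\rightarrow E$ is weak almost limited; order continuity of the norm of $E^{\ast}$ then lets Theorem~\ref{awstDPAlimited} upgrade $I$ to an almost limited operator, so $E$ has the positive dual Schur property; finally, order continuity of the norm of $E$ combined with the positive dual Schur property forces $E$ to be finite dimensional by a result of Wnuk \cite[Proposition 2.1(b)]{WN2012}. Your argument, by contrast, is a direct and fully self-contained construction: you exhibit explicit sequences $(x_{n})$ and $(f_{n})$ violating the wDP$^{\ast}$ property, with the two order-continuity hypotheses cleanly separated (one making $(x_{n})$ weakly null, the other making $(f_{n})$ weak$^{\ast}$ null via $\|P_{n}(x)\|\rightarrow 0$). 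What the paper's route buys is brevity and a demonstration that the proposition is a corollary of the main theorem; what your route buys is independence from both Theorem~\ref{awstDPAlimited} and the external Wnuk reference, at the cost of carrying out the band-projection bookkeeping by hand. Both are clean; yours is arguably more illuminating as to \emph{why} the two order-continuity hypotheses are each indispensable.
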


\begin{proof}
Assume that the norms of $E$ and $E^{\ast }$ are order continuous. As $E$
has the wDP* property the identity operator $I:E\rightarrow E$ is weak
almost limited. Since the norm of $E^{\ast }$ is order continuous then, by
Theorem \ref{awstDPAlimited}, $I$ is almost limited. So $E$ has the positive
dual Schur property. Now, as the norm of $E$ is order continuous then $E$ is
finite dimensional \cite[Proposition 2.1 (b)]{WN2012}. For the second part,
it is enough to note that if $E$ is a reflexive Banach lattice then the
norms of $E$ and $E^{\ast }$ are order continuous \cite[Theorem 4.70]{AB3}.
\end{proof}

Note that from Theorem \ref{regulieraw*DP}, it is easy to see that if $F$ is
a Banach lattice with property $\mathrm{(d)}$ then every order bounded
almost Dunford-Pettis operator $T:E\rightarrow F$ is weak almost limited.
But the convers is false in general. In fact, the identity operator $I:\ell
^{\infty }\rightarrow \ell ^{\infty }$ is weak almost limited operator but
it fail to be almost Dunford-Pettis.

The following result characterizes pairs of Banach lattices $E$, $F$ for
which every positive weak almost limited operator operator $T:E\rightarrow F$
is almost Dunford-Pettis.

\begin{theorem}
\label{aw*DP-aDP}Let $E$ and $F$ be two Banach lattices such that $F$ is $%
\sigma $-Dedekind complete. Then, the following statements are equivalents:

\begin{enumerate}
\item Every order bounded weak almost limited operator $T:E\rightarrow F$ is
almost Dunford-Pettis.

\item Every positive weak almost limited operator $T:E\rightarrow F$ is
almost Dunford-Pettis.

\item One of the following assertions is valid:

\begin{enumerate}
\item[\textrm{(i)}] $E$ has the positive Schur property.

\item[\textrm{(ii)}] The norm of $F$ is order continuous.
\end{enumerate}
\end{enumerate}
\end{theorem}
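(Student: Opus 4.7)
The plan is to mirror the proof of Theorem \ref{awstDPAlimited}. The implication $(1)\Rightarrow(2)$ is trivial, and $(i)\Rightarrow(1)$ is immediate because the positive Schur property of $E$ sends every disjoint weakly null sequence in $E^{+}$ to a norm null one, so any bounded operator into $F$ (in particular any order bounded weak almost limited $T\colon E\to F$) is automatically almost Dunford-Pettis.

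For $(2)\Rightarrow(3)$ I would argue contrapositively. If neither (i) nor (ii) holds, there is a disjoint weakly null $(x_n)\subset E^{+}$ with $\|x_n\|\geq\varepsilon$ and functionals $f_n\in(E^{*})^{+}\cap B_{E^{*}}$ with $f_n(x_n)\geq\varepsilon$; and by \cite[Theorem~4.14]{AB3} there is a disjoint order bounded sequence $(y_n)\subset F^{+}$, say $0\leq y_n\leq y$, with $\|y_n\|\geq\varepsilon'$. I would factor $T=S\circ P$ through $\ell^{\infty}$, where $P(x)=(f_n(x))_n$ is a positive contraction and $S\colon\ell^{\infty}\to F$ is the bounded positive operator defined on $\lambda\in\ell^{\infty}_{+}$ by $S(\lambda)=\sup_N\sum_{n=1}^{N}\lambda_n y_n$ (the supremum exists by $\sigma$-Dedekind completeness of $F$, since the partial sums are disjoint and order bounded by $\|\lambda\|_{\infty}y$). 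By Remark \ref{Rem1}, $T$ is weak almost limited; however $T(x_n)\geq f_n(x_n)y_n\geq\varepsilon y_n$ gives $\|T(x_n)\|\geq\varepsilon\varepsilon'>0$, so $T$ is not almost Dunford-Pettis.

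The core step is $(ii)\Rightarrow(1)$. Let $T\colon E\to F$ be order bounded and weak almost limited with the norm of $F$ order continuous, and let $(x_n)\subset E^{+}$ be disjoint weakly null. Arguing by contradiction, assume $\|Tx_n\|\geq 2\delta$ along a subsequence. Since $(Tx_n)$ is weakly null in $F$, the Kadec--Pe\l czy\'nski approximation (available in any Banach lattice of order continuous norm) yields a further subsequence $(Tx_{n_k})$ and a disjoint sequence $(v_k)\subset F$ with $\|Tx_{n_k}-v_k\|\to 0$, so $\|v_k\|\geq\delta$ eventually. Writing $v_k=v_k^{+}-v_k^{-}$ and passing to a further subsequence I may assume $\|v_k^{+}\|\geq\delta/2$; the sequence $(v_k^{+})$ is then disjoint and positive. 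Using $\sigma$-Dedekind completeness of $F$, I form the band projection $P_k$ onto the principal band $B_k$ generated by $v_k^{+}$ and choose $\phi_k\in(F^{*})^{+}\cap B_{F^{*}}$ with $\phi_k(v_k^{+})\geq\delta/4$. The functionals $g_k:=\phi_k\circ P_k$ are positive contractions, pairwise disjoint in $F^{*}$ (as the bands $B_k$ are mutually disjoint), and weak* null because order continuity of the norm of $F$ forces $\|P_k y\|\to 0$ for every $y\in F$ (the sequence $(P_k y)_k$ being order bounded and disjoint). Since $v_k^{-}$ lies in $B_k^{d}$, we have $g_k(v_k^{-})=0$, whence $g_k(v_k)=\phi_k(v_k^{+})\geq\delta/4$; combined with $|g_k(Tx_{n_k}-v_k)|\leq\|Tx_{n_k}-v_k\|\to 0$, this gives $\liminf_k g_k(Tx_{n_k})\geq\delta/4>0$, contradicting Theorem \ref{regulieraw*DP}(7).

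The main obstacle is precisely this final conversion in $(ii)\Rightarrow(1)$: negating the almost Dunford-Pettis property only furnishes $\|Tx_n\|\not\to 0$, while Theorem \ref{regulieraw*DP}(7) is phrased in terms of a disjoint weak* null positive sequence in $F^{*}$. Manufacturing such a sequence requires both the Kadec--Pe\l czy\'nski approximation of $(Tx_n)$ by a disjoint sequence (exploiting the order continuity of $F$) and a band-projection construction that localizes the test functionals $g_k$ on the positive parts $v_k^{+}$ (using $\sigma$-Dedekind completeness); the negative cross-terms must vanish in order for $g_k(Tx_{n_k})$ to stay bounded away from zero.
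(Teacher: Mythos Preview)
Your proof is correct, but two of the steps are organized differently from the paper.

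For $(2)\Rightarrow(3)$ both arguments factor a counterexample through $\ell^{\infty}$ and invoke Remark~\ref{Rem1}. The paper obtains the second factor $S:\ell^{\infty}\to F$ in one stroke from \cite[Theorem~4.51]{AB3} (a $\sigma$-Dedekind complete $F$ without order continuous norm contains $\ell^{\infty}$ as a closed sublattice), whereas you build $S$ by hand from an order bounded disjoint sequence $(y_n)\subset F^{+}$ via $S(\lambda)=\sup_N\sum_{n\le N}\lambda_n y_n$. Your construction is essentially what underlies the embedding theorem, so the two routes are close; yours makes the use of $\sigma$-Dedekind completeness explicit.

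The real divergence is in $(ii)\Rightarrow(1)$. The paper avoids any disjointification of $(Tx_n)$ by appealing to the Dodds--Fremlin criterion \cite[Corollary~2.6]{DF}: to get $\|Tx_n\|\to 0$ it suffices to check $|Tx_n|\overset{w}{\to}0$ (done via order boundedness of $T^{\ast}$) and $f_n(Tx_n)\to 0$ for every norm bounded disjoint $(f_n)\subset(F^{\ast})^{+}$; order continuity of $F$ makes such $(f_n)$ weak$^{\ast}$ null \cite[Corollary~2.4.3]{MN}, and Theorem~\ref{regulieraw*DP} finishes. Your route---Kadec--Pe\l czy\'nski approximation of $(Tx_{n_k})$ by a disjoint $(v_k)$, then localizing test functionals through the band projections onto $v_k^{+}$---manufactures a specific disjoint weak$^{\ast}$ null sequence $(g_k)\subset(F^{\ast})^{+}$ to feed into Theorem~\ref{regulieraw*DP}(7). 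This is longer but more self-contained: it reproves, in effect, the relevant half of the Dodds--Fremlin machinery using only order continuity of $F$ and the principal projection property of $\sigma$-Dedekind complete lattices. The paper's version is shorter precisely because it outsources that work to \cite{DF}.
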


\begin{proof}
$\left( 1\right) \Rightarrow \left( 2\right) $ Obvious.

$\left( 2\right) \Rightarrow \left( 3\right) $ Assume by way of
contradiction that $E$ does not have the positive Schur property and the
norm of $F$ is not order continuous. We have to construct a positive weak
almost limited operator $T:E\rightarrow F$ which is not almost
Dunford-Pettis. As $E$ does not have the positive Schur property, then there
exists a disjoint weakly null sequence $(x_{n})$ in $E^{+}$ which is not
norm null. By choosing a subsequence we may suppose that there is $%
\varepsilon $ with $\left\Vert x_{n}\right\Vert >\varepsilon >0$ for all $n$%
. From the equality $\left\Vert x_{n}\right\Vert =\sup \{f(x_{n}):f\in
(E^{\ast })^{+},\quad \left\Vert f\right\Vert =1\}$, there exists a sequence 
$\left( f_{n}\right) \subset (E^{\ast })^{+}$ such that $\left\Vert
f_{n}\right\Vert =1$ and $f_{n}(x_{n})\geq \varepsilon $ holds for all $n$.

Now, consider the operator $R:E\rightarrow \ell ^{\infty }$ defined by 
\begin{equation*}
R(x)=(f_{n}(x))_{n}
\end{equation*}%
On the other hand, since the norm of $F$ is not order continuous, it follows
from Theorem 4.51 of \cite{AB3} that $\ell ^{\infty }$ is lattice embeddable
in $F$, i.e., there exists a lattice homomorphism $S:\ell ^{\infty
}\rightarrow F$ and there exist tow positive constants $M$ and $m$ satisfying%
\begin{equation*}
m\left\Vert \left( \lambda _{k}\right) _{k}\right\Vert _{\infty }\leq
S\left( \left( \lambda _{k}\right) _{k}\right) \leq M\left\Vert \left(
\lambda _{k}\right) _{k}\right\Vert _{\infty }
\end{equation*}%
for all $\left( \lambda _{k}\right) _{k}\in \ell ^{\infty }$. Put $T=S\circ
R $, and note that $T$ is a positive weak almost limited operator (see
Remark \ref{Rem1}). However, for the disjoint weakly null sequence $%
(x_{n})\subset E^{+}$, we have%
\begin{equation*}
\left\Vert T\left( x_{n}\right) \right\Vert =\left\Vert S\left( \left(
f_{k}\left( x_{n}\right) \right) _{k}\right) \right\Vert \geq m\left\Vert
\left( f_{k}\left( x_{n}\right) \right) _{k}\right\Vert _{\infty }\geq
mf_{n}\left( x_{n}\right) \geq m\varepsilon
\end{equation*}%
for every $n$. This show that $T$ is not almost Dunford-Pettis, and we are
done.

$\left( i\right) \Rightarrow \left( 1\right) $ In this case, every operator $%
T:E\rightarrow F$ is almost Dunford-Pettis.

$\left( ii\right) \Rightarrow \left( 1\right) $ Let $T:E\rightarrow F$ be an
order bounded weak almost limited operator and let $\left( x_{n}\right)
\subset E$ be a positive disjoint weakly null sequence. We shall show that $%
\left\Vert Tx_{n}\right\Vert \rightarrow 0$. By corollary 2.6 of \cite{DF},
it suffices to proof that $\left\vert Tx_{n}\right\vert \overset{w}{%
\rightarrow }0$ and $f_{n}\left( Tx_{n}\right) \rightarrow 0$ for every
disjoint and norm bounded sequence $\left( f_{n}\right) \subset \left(
F^{\ast }\right) ^{+}$.

Indeed

- Let $f\in \left( F^{\ast }\right) ^{+}$. By Theorem 1.23 of \cite{AB3},
for each $n$ there exists some $g_{n}\in \left[ -f,f\right] $ with $%
f\left\vert Tx_{n}\right\vert =g_{n}\left( Tx_{n}\right) $. Note that the
adjoint operator $T^{\ast }:F^{\ast }\rightarrow E^{\ast }$ is order bounded 
\cite[Theorem 1.73]{AB3}, and pick some $h\in \left( E^{\ast }\right) ^{+}$
with $T^{\ast }\left[ -f,f\right] \subseteq \left[ -h,h\right] $. So $0\leq f\left\vert Tx_{n}\right\vert =\left( T^{\ast }g_{n}\right)
\left( x_{n}\right) \leq h\left( x_{n}\right) $ for all $n$. Since $x_{n}%
\overset{w}{\rightarrow }0$, then $h\left( x_{n}\right) \rightarrow 0$ and
hence $f\left\vert Tx_{n}\right\vert \rightarrow 0$. Thus $\left\vert
Tx_{n}\right\vert \overset{w}{\rightarrow }0$.

- Let $\left( f_{n}\right) \subset \left( F^{\ast }\right) ^{+}$ be a
disjoint and norm bounded sequence. As the norm of $F$ is order continuous,
then by corollary 2.4.3 of \cite{MN} $f_{n}\overset{w^{\ast }}{\rightarrow }%
0 $. Now, since $T$ is an order bounded weak almost limited then $%
f_{n}\left( Tx_{n}\right) \rightarrow 0$ (Theorem \ref{regulieraw*DP}). This
complete the proof.
\end{proof}

As\ consequence of Theorem \ref{aw*DP-aDP}, we obtain the following
corollary.

\begin{corollary}
A $\sigma $-Dedekind complete Banach lattices $E$ has an order continuous
norm if and only if every order bounded weak almost limited operator $%
T:E\rightarrow E$ is almost Dunford-Pettis.
\end{corollary}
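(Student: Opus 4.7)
The plan is to apply Theorem~\ref{aw*DP-aDP} with $F=E$, which is legitimate because $E$ is assumed $\sigma$-Dedekind complete. With this choice, the theorem delivers an equivalence between the operator-level condition appearing in the corollary (clause (1) of the theorem, since an ``order bounded weak almost limited operator'' is exactly what is discussed there) and the disjunction in clause (3): namely, that $E$ has the positive Schur property or the norm of $E$ is order continuous.

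For the ``if'' direction I would simply note that order continuity of the norm of $E$ is precisely case (3)(ii) of Theorem~\ref{aw*DP-aDP} with $F=E$, which forces (1), so every order bounded weak almost limited operator $T:E\to E$ is almost Dunford--Pettis.

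For the ``only if'' direction, the standing hypothesis forces clause (3) of Theorem~\ref{aw*DP-aDP}. If case (ii) holds we are done immediately. If instead case (i) holds, I would invoke the classical fact that the positive Schur property entails order continuity of the norm: otherwise $c_0$ would be lattice embeddable in $E$, producing a positive, disjoint, weakly null sequence of unit vectors, in direct contradiction with the positive Schur property. Either way, the norm of $E$ is order continuous, closing the equivalence.

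I do not expect any real obstacle here: all the substance is packed into Theorem~\ref{aw*DP-aDP}, and the auxiliary implication ``positive Schur $\Rightarrow$ order continuous norm'' is standard. The only point requiring a touch of care is correctly citing the lattice-embedding theorem (e.g.\ Theorem~4.51 of \cite{AB3} or the corresponding result in \cite{MN}) used in the contrapositive argument above; this mirrors exactly the style of the preceding corollary, which reduces analogously to Theorem~\ref{awstDPAlimited}.
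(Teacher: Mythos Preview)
Your proposal is correct and follows exactly the approach the paper intends: the paper offers no explicit proof for this corollary, but the parallel corollary after Theorem~\ref{awstDPAlimited} is proved precisely by the reduction you describe (apply the theorem with $F=E$ and note that the ``Schur-type'' alternative already forces order continuity). Your auxiliary step ``positive Schur $\Rightarrow$ order continuous norm'' is standard and correctly justified; the only cosmetic point is that the cleanest argument here uses an order-bounded disjoint positive sequence (automatically weakly null) rather than a $c_0$-embedding, avoiding any citation subtlety.
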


By Remark \ref{Rem1} every operator $T:E\rightarrow \ell ^{\infty }$ is weak
almost limited. As another consequence of Theorem \ref{aw*DP-aDP}, we obtain
the following characterization of the positive Schur property.

\begin{corollary}
A Banach lattices $E$ has the positive Schur property if and only if every
positive operator $T:E\rightarrow \ell ^{\infty }$ is almost Dunford-Pettis.
\end{corollary}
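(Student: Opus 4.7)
The plan is to deduce the corollary directly from Theorem \ref{aw*DP-aDP} together with Remark \ref{Rem1}, with the key observation that $\ell^{\infty}$ is $\sigma$-Dedekind complete but its norm is not order continuous.

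For the necessity direction, I would argue as follows. Suppose $E$ has the positive Schur property, and let $T:E\rightarrow \ell^{\infty}$ be any positive operator (in fact, any bounded operator suffices). If $(x_{n})\subset E^{+}$ is a disjoint weakly null sequence, then the positive Schur property gives $\|x_{n}\|\rightarrow 0$, and hence $\|Tx_{n}\|\rightarrow 0$. By the characterization of almost Dunford--Pettis operators recalled in the introduction (\cite[Remark 1]{WN3}), $T$ is almost Dunford--Pettis.

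For the sufficiency direction, the idea is that the hypothesis of the corollary is exactly assertion $(2)$ of Theorem \ref{aw*DP-aDP} with $F=\ell^{\infty}$, provided we can replace \emph{positive operator} by \emph{positive weak almost limited operator}. But by Remark \ref{Rem1}, every operator $T:E\rightarrow \ell^{\infty}$ factors trivially through $\ell^{\infty}$ (via the identity), so every such $T$ is weak almost limited; in particular this holds for every positive $T:E\rightarrow \ell^{\infty}$. Thus the assumption that every positive $T:E\rightarrow \ell^{\infty}$ is almost Dunford--Pettis coincides with condition $(2)$ of Theorem \ref{aw*DP-aDP}. Since $\ell^{\infty}$ is $\sigma$-Dedekind complete, Theorem \ref{aw*DP-aDP} applies and yields either $(i)$ $E$ has the positive Schur property, or $(ii)$ the norm of $\ell^{\infty}$ is order continuous. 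As the norm of $\ell^{\infty}$ is not order continuous, alternative $(ii)$ is excluded, and we conclude that $E$ has the positive Schur property.

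There is no genuine obstacle here; the only subtle point is recognizing that Remark \ref{Rem1} lets us upgrade ``every positive operator $E\rightarrow \ell^{\infty}$'' to ``every positive weak almost limited operator $E\rightarrow \ell^{\infty}$'' for free, which is what makes Theorem \ref{aw*DP-aDP} directly applicable.
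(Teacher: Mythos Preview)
Your proposal is correct and follows essentially the same approach as the paper: the paper simply records (in the sentence preceding the corollary) that by Remark \ref{Rem1} every operator $E\to\ell^{\infty}$ is weak almost limited, and then invokes Theorem \ref{aw*DP-aDP} with $F=\ell^{\infty}$, ruling out alternative $(ii)$ exactly as you do. Your necessity argument is slightly more direct than appealing to the implication $(i)\Rightarrow(1)$ of the theorem, but the content is identical.
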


\end{document}